\newcommand{\hide}[1]{}
\theoremstyle{definition}
\newtheorem{theorem}{Theorem}
\newtheorem{definition}[theorem]{Definition}
\newtheorem{proposition}[theorem]{Proposition}
\newtheorem*{ores}{Ore's Theorem}
\numberwithin{theorem}{section}
\title{On the Number of Acyclic Orientations of Complete $k$-Partite Graphs}
\author{
Veselin Blagoev \\
The University of Edinburgh \\
s1136008@sms.ed.ac.uk
}
\date{}
\begin{document}
\maketitle

\begin{abstract}
\noindent
Building on previous work by Cameron et al. in \cite{cameron}, we give a recurrence for computing the number of acyclic orientations of complete $k$-partite graphs, which can be implemented to obtain a dynamic programming algorithm running in time $n^{O(k)}$, where $n$ is the number of vertices in the graph. We prove our result by using a relationship between the number of acyclic orientations and the number of Hamiltonian paths in complete $k$-partite graphs and providing a recurrence for the latter quantity. We give a simple extension of our algorithm to the situation when we are an edge removal away from having a complete $k$-partite graph.
\end{abstract}

\section{Introduction}
Let $G=(V,E)$ be a simple undirected graph with vertex set $V$ and edge set $E$. We have the following definition.

\begin{definition}\label{aoDef}
An acyclic orientation of $G$ is a (total) function
\begin{gather*}
 \sigma \colon E \to V \times V \\
 \sigma(\{u,v\}) \in \{(u,v),(v,u)\}
\end{gather*}
that assigns an orientation to each edge of $G$ such that there is no directed cycle in the resulting digraph.
\end{definition}

We denote the set of all acyclic orientations of $G$ by $\mathcal{AO}(G)$, and we are interested in computing $\vert \mathcal{AO}(G) \vert$. Richard Stanley \cite{stanley} proved the relationship between $ \mathcal{AO}(G)$ and $\chi(G;\lambda)$, the \emph{chromatic polynomial} of $G$: namely, we obtain $\vert \mathcal{AO}(G) \vert$  by evaluating $\chi(G;-1)$. Equivalently, we compute  $\vert \mathcal{AO}(G) \vert$ by evaluating the Tutte polynomial of $G$, denoted by $T(G;x,y)$, at the point $(2,0)$. There are polynomial-time algorithms for evaluating exactly $T(G;x,y)$ at every point in the plane, including $(2,0)$, for graphs $G$ of bounded treewidth (see, e.g. \cite{andrzejak}, \cite{noble}). However, for general graphs computing $T(G; 2,0)$ exactly is $\#P$-hard \cite{jaegerVertiganWelsh}. The problem remains $\#P$-complete even for planar graphs \cite{vertigan}. This has prompted work towards approximating $\vert \mathcal{AO}(G) \vert$ efficiently, with positive results for some special classes of graphs, e.g. graphs with large girth (see Bordewich \cite{bordewich}). In the context of random graphs, Reidys \cite{reidys} has shown that the number $\log(\vert \mathcal{AO}(G) \vert)$ is tightly concentrated for $G\sim \mathcal{G}_{n,p}$.

Our work follows from Cameron et al.'s \cite{cameron} on complete bipartite graphs $K_{n_1,n_2}$, where $n_1, n_2$ are the sizes of the two vertex parts. In \cite{cameron} Cameron et al. give an expression for the number of acyclic orientations of $K_{n_1,n_2}$: namely, that 
\begin{gather}\label{completeBipAOs}
\vert \mathcal{AO}(K_{n_1,n_2})\vert = \sum_{i=1}^{min\{n_1+1,n_2+1\}} (i-1)!^2S(n_1+1,i)S(n_2+1,i)
\end{gather}
where $S(a,b)$ denotes the \emph{Stirling number of the second kind} counting the number of ways to partition a set of $a$ objects into $b$ non-empty parts. 

We generalise Cameron et al.'s argument to complete $k$-partite graphs, for $k\geq 2$. While we are not able to simplify our count to a neat formula as in (\ref{completeBipAOs}), we instead give a recurrence that can be implemented in dynamic programming fashion. The rest of this note is structured as follows. In Section \ref{prelimSec} we establish our notation and give some definitions, including a particular representation of elements of $\mathcal{AO}(G)$ as employed by Reidys in \cite{reidys}, together with a bijection between the set of acyclic orientations of a complete $k$-partite graph and an appropriately defined set of permutations. In Section \ref{mainPartSec} we establish the connection between our set of permutations and the number of Hamiltonian paths of appropriately defined complete $k$-partite graphs. We present and prove our main recurrence, and we conclude with a straightforward extension to a complete $k$-partite graph with one extra edge added within a vertex part.

\section{Preliminaries}\label{prelimSec}
In the following we give a clear definition of our complete $k$-partite graphs, and we describe what turns out to be a useful representation of the acyclic orientations of a particular graph.

\begin{definition}\label{compl}
Let $k\geq 2$ be an integer, and let $\mathbf{n} \in \mathbb{Z}^k_{\geq 1}$. We denote by $G_{\mathbf{n}} = (V_{\mathbf{n}}, E_{\mathbf{n}})$ the complete $k$-partite graph where $V_{\mathbf{n}} = V_1 \cup V_2 \cup \dots\ \cup V_k$, with $\vert V_i \vert = n_i$ for all $i\in [k]$ and $V_i\cap V_j = \emptyset$ for all $i\neq j$, with $i,j\in [k]$, and $\{u,v\}\in E_{\mathbf{n}}$ if and only if $u\in V_i$ and $v\in V_j$ for some $i\neq j$, with $i,j \in [k]$.
\end{definition}

For the case where $k=2$, we get a complete bipartite graph, and Cameron et al.'s result in \cite{cameron} applies. Let $n$ denote the total number of vertices in $G_{\mathbf{n}}$: $n = \sum_{i=1}^k n_i$.

Now consider a permutation $\pi$ of $V_{\mathbf{n}}$. It induces an element $\sigma \in \mathcal{AO}(G_{\mathbf{n}})$ in the following way: for $i<j$ let $\sigma(\{\pi(i), \pi(j)\}) = (\pi(i), \pi(j))$  whenever $\{\pi(i),\pi(j)\} \in E_{\mathbf{n}}$. In other words, given a total ordering of the vertices of $G_{\mathbf{n}}$ we orient each edge from \emph{smaller} to \emph{larger} according to the ordering. Observe that $\pi$ can be viewed as a sequence of maximal blocks such that each block consists only of vertices that all come from the same vertex part $V_i$, and the blocks are maximal in the sense that we do not have two blocks with vertices from the same $V_i$ sitting next to each other in $\pi$: we would consider that as one block. As noted in \cite{reidys} and \cite{cameron}, if we are to change the order of the vertices within a single such block of $\pi$, we would still get the same $\sigma \in \mathcal{AO}(G_{\mathbf{n}})$, because there are no edges between any two vertices lying in the same block, hence, no edge would be changing its orientation. However, if we change the order of the blocks themselves, and/or we change the constitution of the blocks (i.e., which vertices go in which blocks), we obtain a different acyclic orientation.

\begin{definition}\label{partititionsOfPartitions}
For $i\in [k]$, if we have $V_i = f_{i,1} \cup \dots \cup f_{i,m_i}$ for some $m_i\leq n_i$ with $f_{i,p}\neq \emptyset$ for all $p\in [m_i]$ and $f_{i,p}\cap f_{i,q} = \emptyset$ for all $p\neq q$, with $p,q\in [m_i]$, then we call the $f_{i,p}$'s \emph{blocks} of $V_i$.
Given a collection $F = \{f_{1,1}, \dots, f_{1,m_1}, f_{2,1},\dots, f_{2,m_2},\dots, f_{k,1},\dots, f_{k,m_k}\}$ of blocks partitioning every $V_i$, let $S_F$ be the set of permutations of $F$ such that no two consecutive elements of the permutation have the same first label. Note that $S_F$ could be empty (we give a necessary and sufficient condition for $S_F$ being non-empty in Section \ref{mainSubsec}). Observe that if we take some $\pi_F\in S_F$ and \emph{unfold} the blocks into sequences of individual vertices (in any order within each block), we obtain a permutation of $V_{\mathbf{n}}$ that induces $\pi_F$ as its sequence of maximal blocks. Let $\mathcal{F}$ be the set of all such collections of blocks partitioning every $V_i$, and note that for any $F_1,F_2 \in \mathcal{F}$ with $F_1 \neq F_2$ we have $S_{F_1}\cap S_{F_2} = \emptyset$.\footnote{Even if two collections $F_1$ and $F_2$ have the same number of blocks for each $V_i$, they are still considered different collections if the blocks have different constituent vertices: i.e., if there exist two $u,v\in V_i$ for some $i\in [k]$ such that $u$ and $v$ are in the same block in $F_1$ but are in different blocks in $F_2$. Hence, the $S_{F_1}$ and $S_{F_2}$ are disjoint in the sense that if we take some $\pi_{F_1} \in S_{F_1}$ and some $\pi_{F_2} \in S_{F_2}$, unfold them as described by substituting each block with its actual constituent vertices of $G_{\mathbf{n}}$ to obtain two permutations $\pi_1$ and $\pi_2$ of $V_{\mathbf{n}}$, then the two acyclic orientations induced are different.}
\end{definition}

We let $\Phi = \bigcup_{F\in \mathcal{F}} S_F$. We prove the following observation, which is a generalisation of Cameron et al.'s argument  for complete bipartite graphs in \cite{cameron}, and is implicit in Reidy's representation of acyclic orientations in \cite{reidys}.

\begin{proposition}[\textit{Adapted from \cite{reidys}}]\label{bijectionAOPerms}
$$\vert \mathcal{AO}(G_{\mathbf{n}})\vert = \vert \Phi \vert $$
\end{proposition}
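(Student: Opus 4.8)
The plan is to exhibit an explicit bijection between $\Phi$ and $\mathcal{AO}(G_{\mathbf{n}})$ and conclude that the two sets have equal cardinality. I would define a map $\Psi \colon \Phi \to \mathcal{AO}(G_{\mathbf{n}})$ as follows: given $\pi_F \in S_F$ for some $F \in \mathcal{F}$, unfold its blocks into a permutation of $V_{\mathbf{n}}$ (choosing any internal order within each block) and let $\Psi(\pi_F)$ be the acyclic orientation induced by that permutation via the small-to-large rule. The first task is to check that $\Psi$ is well defined, i.e.\ that the choice of internal order is immaterial; this is exactly the observation already recorded after Definition~\ref{compl}, since there are no edges between two vertices of the same part, so no edge orientation depends on the within-block order. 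I would also note that because consecutive blocks of $\pi_F$ carry distinct first labels, the unfolded permutation genuinely recovers $\pi_F$ as its own sequence of maximal blocks, so that no two blocks accidentally merge.

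Surjectivity of $\Psi$ is the easy direction. Given any $\sigma \in \mathcal{AO}(G_{\mathbf{n}})$, the resulting digraph is acyclic and hence admits a topological ordering; reading the vertices in this order gives a permutation $\pi$ of $V_{\mathbf{n}}$ whose small-to-large orientation is precisely $\sigma$. Grouping $\pi$ into its maximal runs of same-part vertices yields a collection $F \in \mathcal{F}$ together with a block sequence lying in $S_F$ (maximality guarantees that no two consecutive blocks share a first label), and $\Psi$ sends this block sequence back to $\sigma$.

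The substance of the argument is injectivity, which amounts to recovering from $\sigma$ both the block collection $F$ and the order of the blocks. For the blocks themselves, I would show that two vertices $u,v$ of the same part $V_i$ lie in a common block if and only if $\sigma$ orients every pair of edges $\{u,w\}$ and $\{v,w\}$, for $w$ outside $V_i$, in agreement, i.e.\ $w$ is a common in-neighbour or common out-neighbour of both; since each block unfolds to a contiguous segment, vertices in one block share this pattern, while vertices in distinct same-part blocks are separated in the permutation by at least one vertex of another part that distinguishes them. This pins down $F_1 = F_2$ whenever $\Psi(\pi_{F_1}) = \Psi(\pi_{F_2})$, recovering the disjointness asserted in the footnote to Definition~\ref{partititionsOfPartitions}. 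For the order, I would argue that two distinct permutations in the same $S_F$ must invert some pair of blocks drawn from \emph{different} parts: the defining no-consecutive-same-label condition forces any two same-part blocks to be separated by a block of another part, so agreement on all cross-part pairs propagates, by transitivity, to agreement on same-part pairs as well. Since an inverted cross-part pair of blocks corresponds to a flipped edge of $G_{\mathbf{n}}$, distinct elements of $S_F$ induce distinct orientations.

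The main obstacle I anticipate is precisely this last separating-block property: making rigorous that the prohibition on adjacent same-part blocks guarantees, between any two blocks of the same part, an intervening block of a different part, and hence that the relative order of all blocks is determined by the cross-part edges alone. Once that combinatorial lemma is in place, well-definedness, surjectivity, and injectivity combine to give the desired bijection, and therefore $\vert \mathcal{AO}(G_{\mathbf{n}}) \vert = \vert \Phi \vert$.
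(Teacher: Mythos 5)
Your proposal is correct and follows essentially the same route as the paper: the identical unfolding map from $\Phi$ to $\mathcal{AO}(G_{\mathbf{n}})$, well-definedness via the no-edges-within-a-part observation, and surjectivity by taking a linear extension (topological order) of the partial order given by $\sigma$ and reading off its maximal blocks. The only difference is that you spell out the injectivity step in detail (recovering the block partition from common in/out-neighbour patterns and the block order from cross-part pairs), whereas the paper asserts this by appeal to the observation in Definition~\ref{partititionsOfPartitions}; your elaboration is sound and merely makes explicit what the paper leaves implicit.
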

\begin{proof}
We construct a function $\psi \colon \Phi \to \mathcal{AO}(G_{\mathbf{n}})$ as follows. For $\phi \in \Phi$, let $\psi(\phi)$ be the acyclic orientation obtained by unfolding every block of $\phi$ (with individual vertices within each block ordered in any arbitrary way when unfolding, for example, lexicographically: as argued, it would not change the resulting acyclic orientation) and orienting the edges of $G_{\mathbf{n}}$ from smaller to larger as prescribed by the obtained ordering of the vertices of $G_{\mathbf{n}}$. We argue $\psi$ is a bijection. As noted in Definition \ref{partititionsOfPartitions}, $\psi$ is injective: unfolding two different permutations $\pi,\pi' \in \Phi$ gives two different acyclic orientations of $G_{\mathbf{n}}$.  Moreover, $\psi$ is surjective: we can take any $\sigma\in \mathcal{AO}(G_{\mathbf{n}})$ and we can consider any linear extension of the unique partial order defined by $\sigma$ on $V_{\mathbf{n}}$ to obtain a permutation of $V_{\mathbf{n}}$. We can then read off the blocks of that permutation: the resulting element of $\Phi$ is mapped to $\sigma$ by $\psi$. Therefore, we have that $\vert \mathcal{AO}(G_{\mathbf{n}})\vert = \vert \Phi \vert $.
\end{proof}

By Proposition \ref{bijectionAOPerms} we can compute $\vert \mathcal{AO}(G_{\mathbf{n}})\vert$ by computing $\vert \Phi \vert$. Note that we have $\vert \Phi \vert = \sum_{F\in \mathcal{F}} \vert S_F \vert$, because $S_{F_1}$ and $S_{F_2}$ are disjoint for different $F_1$ and $F_2$. We have the following definition.

\begin{definition}\label{vectorsOfPartNums}
Let $M = \{ \mathbf{m} \in \mathbb{Z}_{\geq 1}^k\ \mid \forall i \ m_i \leq n_i\}$. We will use $\mathbf{m} \in M$ as a tuple carrying the information about the number of blocks in the partition of each $V_i$ in some collection of blocks.
\end{definition}

For a collection of blocks $F$ and $\mathbf{m}\in M$ we say that $F$ agrees with $\mathbf{m}$ if for all $i\in [k]$ the number of blocks of $V_i$ in $F$ is $m_i$. Let $\mathcal{F}_{\mathbf{m}}$ be the set of collections of blocks that agree with a specific $\mathbf{m}$. We can easily give an expression for the size of $\mathcal{F}_{\mathbf{m}}$ in terms of $\mathbf{m}$.

\begin{proposition} \label{numOfPartitionsInNumParts}
For any $\mathbf{m}\in M$ we have $\vert \mathcal{F}_{\mathbf{m}} \vert = \prod\limits_{i=1}^k S(n_i, m_i)$, where $S(\cdot, \cdot)$ is the Stirling number of the second kind. 
\end{proposition}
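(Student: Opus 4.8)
The plan is to recognise that a collection $F \in \mathcal{F}_{\mathbf{m}}$ is nothing more than an independent choice, for each $i \in [k]$, of a partition of $V_i$ into exactly $m_i$ non-empty blocks, and then to invoke the definition of the Stirling number of the second kind together with the multiplication principle. So the whole argument amounts to exhibiting a bijection between $\mathcal{F}_{\mathbf{m}}$ and a product of sets of partitions.

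First I would make the decomposition precise. By Definition \ref{partititionsOfPartitions}, a collection $F$ agreeing with $\mathbf{m}$ consists of blocks $f_{i,1}, \dots, f_{i,m_i}$ for each $i \in [k]$, where for fixed $i$ these blocks are non-empty, pairwise disjoint, and have union $V_i$. Thus $\{f_{i,1}, \dots, f_{i,m_i}\}$ is precisely a partition of $V_i$ into $m_i$ non-empty parts, and conversely any $k$-tuple of such partitions (one for each $i$) assembles into a unique collection in $\mathcal{F}_{\mathbf{m}}$. Since the $V_i$ are pairwise disjoint, these partitions can be chosen independently, and I would record this as a bijection between $\mathcal{F}_{\mathbf{m}}$ and the product set $\prod_{i=1}^k \mathcal{P}_i$, where $\mathcal{P}_i$ denotes the set of partitions of $V_i$ into $m_i$ non-empty parts.

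Next I would count each factor. By the very definition of the Stirling number of the second kind we have $\vert \mathcal{P}_i \vert = S(n_i, m_i)$, since $\vert V_i \vert = n_i$ and the parts are non-empty. Applying the multiplication principle to the product bijection then yields $\vert \mathcal{F}_{\mathbf{m}} \vert = \prod_{i=1}^k \vert \mathcal{P}_i \vert = \prod_{i=1}^k S(n_i, m_i)$, which is exactly the claimed identity.

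I do not expect any genuine obstacle here; the one point deserving care is that within a collection the blocks are treated as an \emph{unordered} set rather than an ordered sequence, so that each factor is counted by $S(n_i, m_i)$ and not by an ordered variant such as $m_i!\, S(n_i, m_i)$. Confirming that Definition \ref{partititionsOfPartitions} indeed regards $F$ as a set of blocks, introducing no spurious ordering among them, is the only place where I would pause before concluding.
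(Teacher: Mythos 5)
Your proof is correct and follows essentially the same approach as the paper: decompose $F$ into independent partitions of each $V_i$, count each by the definition of $S(n_i,m_i)$, and multiply. The paper's proof is just a more condensed statement of this same argument, so your extra care about the bijection and the unordered nature of the blocks is fine but not a departure.
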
 
\begin{proof}
For two positive integer $a,b$ with $a\geq b$, the Stirling number of the second kind $S(a,b)$ is the number of partitions of an $a-$element set into $b$ parts. Therefore, $S(n_i, m_i)$ gives precisely the number of ways to partition $V_i$ into $m_i$ blocks, and we need to do that for every $V_i$.
\end{proof}

Observe that for any $\mathbf{m}\in M$ and for any $F\in \mathcal{F}_{\mathbf{m}}$ the quantity $\vert S_F \vert$ depends only on the number of blocks for each $V_i$ in $F$, but not on the actual constituent vertices of each block. Therefore, for any two collections of blocks $F_1$ and $F_2$ that agree with the same $\mathbf{m}$ we have that $\vert S_{F_1} \vert = \vert S_{F_2} \vert = s_{\mathbf{m}}$.

Combining these observations we can re-write $\vert \mathcal{AO}(G_{\mathbf{n}}) \vert$ as follows:
\begin{gather}\label{sumPhi}
\vert \mathcal{AO}(G_{\mathbf{n}}) \vert = \vert \Phi \vert = \sum_{F\in \mathcal{F}} \vert S_F \vert = \sum_{\mathbf{m} \in M} \sum_{F \in \mathcal{F}_{\mathbf{m}}} \vert S_F \vert = \sum_{\mathbf{m} \in M} s_{\mathbf{m}} \cdot \prod_{i=1}^k S(n_i, \mathbf{m}_i)
\end{gather}

In the following section we present an approach to calculating the quantities $s_{\mathbf{m}}$ for any $\mathbf{m}\in M$, thus, obtaining an algorithm for computing $\vert \mathcal{AO}(G_{\mathbf{n}}) \vert$.

\section{Algorithm for evaluating $\vert \Phi \vert$}\label{mainPartSec}

We first show an equivalence between $s_{\mathbf{m}}$ and the number of Hamiltonian paths in an appropriately defined complete $k$-partite graph. Then, we present a recurrence for computing the number of these paths. We finish the section by giving a straightforward extension to graphs that are one edge removal away from being complete $k$-partite graphs.

\subsection{Computing $s_{\mathbf{m}}$} \label{mainSubsec}
We approach the problem of computing $s_{\mathbf{m}}$ in the following way.

For $\mathbf{m}\in M$ we define the graph $G_{\mathbf{m}} = (V_{\mathbf{m}}, E_{\mathbf{m}})$ as follows: let $V_{\mathbf{m}} = \bigcup_{i=1}^k f_i$ where $\forall i \ \vert f_i\vert = m_i$ and $\forall i\neq j \ f_i \cap f_j = \emptyset$. Also, $\{a,b\}\in E_{\mathbf{m}}$ if and only if $a\in f_i$ and $b\in f_j$ for some $i\neq j$. Thus, $G_{\mathbf{m}}$ is itself a complete $k-$partite graph where the sizes of the vertex parts are given by $\mathbf{m}$. Let $HP(G_{\mathbf{m}})$ denote the set of Hamiltonian paths of $G_{\mathbf{m}}$. We prove the following proposition.

\begin{proposition}\label{HamPathsPermutes}
For any $\mathbf{m}\in M$ we have that $s_{\mathbf{m}} = \vert HP(G_{\mathbf{m}}) \vert$.
\end{proposition}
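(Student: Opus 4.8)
The plan is to prove the identity by exhibiting an explicit bijection between $S_F$, for any fixed representative collection $F \in \mathcal{F}_{\mathbf{m}}$, and the set $HP(G_{\mathbf{m}})$; since $\vert S_F \vert = s_{\mathbf{m}}$ for every such $F$, this at once yields $s_{\mathbf{m}} = \vert HP(G_{\mathbf{m}})\vert$. The observation driving everything is that the blocks of $F$ are in natural correspondence with the vertices of $G_{\mathbf{m}}$: the $m_i$ blocks partitioning $V_i$ can be identified with the $m_i$ vertices of the part $f_i$ of $G_{\mathbf{m}}$, and this identification respects the part structure, sending blocks carrying first label $i$ to vertices of $f_i$. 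I would fix such an identification once and for all.

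First I would take a permutation $\pi_F \in S_F$, read it left to right as a sequence of blocks $b_1, b_2, \dots, b_N$ where $N = m_1 + \dots + m_k = \vert V_{\mathbf{m}}\vert$, and translate it under the identification into a sequence $w_1, w_2, \dots, w_N$ of vertices of $G_{\mathbf{m}}$ in which each vertex occurs exactly once. The crucial step is to check that this sequence traces a Hamiltonian path: by Definition \ref{compl}, two vertices of $G_{\mathbf{m}}$ are adjacent precisely when they lie in different parts, so $\{w_j, w_{j+1}\} \in E_{\mathbf{m}}$ holds for every $j$ if and only if no two consecutive vertices share a part, which is exactly the defining constraint of $S_F$ that no two consecutive blocks have the same first label. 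Hence the constraint cutting out $S_F$ transcribes verbatim into the requirement that consecutive vertices be adjacent.

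For the reverse direction I would take a Hamiltonian path of $G_{\mathbf{m}}$, written as its vertex sequence, pull it back through the identification to an ordering of the blocks of $F$, and observe that adjacency of consecutive vertices forces consecutive blocks to have distinct first labels, so the ordering lies in $S_F$. The two maps are visibly mutually inverse, which establishes the bijection; the degenerate case where $S_F = \emptyset$ corresponds exactly to $G_{\mathbf{m}}$ admitting no Hamiltonian path, so the identity holds there as well.

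The one point that genuinely requires care — and the only place the argument could go wrong by a factor — is the counting convention for Hamiltonian paths. Since a permutation and its reversal are distinct elements of $S_F$, the equality forces $HP(G_{\mathbf{m}})$ to be read as the set of \emph{ordered} (directed, or endpoint-distinguished) Hamiltonian paths rather than undirected path-subgraphs; one checks on the smallest case $\mathbf{m}=(1,1)$ that $\vert S_F\vert = 2$ while the single edge $G_{\mathbf{m}}$ carries one undirected but two ordered paths, confirming the convention. Under that reading, reversing a permutation in $S_F$ corresponds to traversing the path in the opposite direction, and the bijection is exact. I would therefore state this convention explicitly before running the argument so that no spurious factor of two intrudes.
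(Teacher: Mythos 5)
Your proof is correct and follows essentially the same route as the paper: fix a representative collection $F$ agreeing with $\mathbf{m}$, identify its blocks with the vertices of $G_{\mathbf{m}}$, and observe that the no-two-consecutive-same-part condition defining $S_F$ is exactly the adjacency condition for a Hamiltonian vertex sequence, giving a bijection. Your explicit remark that $HP(G_{\mathbf{m}})$ must count ordered (sequence) paths is a point the paper leaves implicit, but it matches the paper's convention (e.g.\ $s_{\mathbf{1}} = k!$), so nothing is lost.
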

\begin{proof}
Consider any collection of blocks $F$ which agrees with $\mathbf{m}$. Then, the vertices of $G_{\mathbf{m}}$ are labelled by the blocks of $F$, and the $k$ vertex parts forming the partition of the vertex set of $G_{\mathbf{m}}$ are given by the first label of each block in $F$ (i.e., the $V_i$ that each block is originally part of). Firstly, note that any $\sigma \in S_F$ is a Hamiltonian path of $G_{\mathbf{m}}$: the $\sigma$ lists the vertices in the sequence they are visited, and by definition of an element of $S_F$ we do not have adjacent vertices from the same part, whereas we always have an edge between two vertices from different vertex parts of $G_{\mathbf{m}}$. Moreover, any $\sigma_1, \sigma_2\in S_F$ such that $\sigma_1 \neq \sigma_2$ denote two different Hamiltonian paths: since $\sigma_1 \neq \sigma_2$, there must exist a position $j$ such that $\sigma_1(j) \neq \sigma_2 (j)$, implying that the Hamiltonian path given by $\sigma_1$ differs from the Hamiltonian path given by $\sigma_2$ in the $j^{th}$ vertex visited. On the other hand, if we consider any Hamiltonian path $p$ of $G_{\mathbf{m}}$ as a sequence of vertices in the order that they are visited by $p$, clearly this sequence gives an element of $S_F$: we cannot have two vertices from the same part next to each other in $p$ since we do not have an edge within parts. Therefore, $\vert HP(G_{\mathbf{m}}) \vert = \vert S_F \vert = s_{\mathbf{m}}$.
\end{proof}

Before we proceed to prove a recurrence for the number of Hamiltonian paths in a complete $k-$partite graph $G_{\mathbf{m}}$ we need some more notation. Let $\mathbf{0}$ and $\mathbf{1}$ denote the $k-$element tuples consisting  of all zeros and of all ones, respectively. Let $\mathbf{1_i}$ denote the $k-$element tuple where the $i^{th}$ element is equal to $1$ and the remaining elements are zeros. We denote by $\mathbf{m}_{+1_i}$ the tuple where we have added $1$ to the $i^{th}$ element of $\mathbf{m}$ while keeping the other elements the same, and similarly we denote by $\mathbf{m}_{-1_i}$ the tuple where we have subtracted $1$ from the $i^{th}$ element of  $\mathbf{m}$ while keeping the other elements the same.

We give a necessary and sufficient condition for the existence of a Hamiltonian path in a complete $k-$partite graph. In order to do that, we need a classic result in graph theory.

\begin{ores}[\cite{ore}]\label{oresTh}
In any connected graph $G$ with $n\geq 3$ vertices, if for every pair of distinct non-adjacent vertices $u$ and $v$ we have that $deg_G(u) + deg_G(v) \geq n$, then $G$ has a Hamiltonian cycle.
\end{ores}

We now prove the following.

\begin{proposition}\label{oresThForHPsCompleteKPartite}
Given $\mathbf{m}\in \mathbb{Z}^k_{\geq 1}$, for all $i \in [k]$ we have $\sum_{j\neq i} m_j  \geq m_i - 1$ if and only if $s_{\mathbf{m}} > 0$.
\end{proposition}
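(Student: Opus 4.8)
The plan is to invoke Proposition \ref{HamPathsPermutes} to convert the claim about $s_{\mathbf{m}}$ into a statement about Hamiltonian paths: since $s_{\mathbf{m}} = \vert HP(G_{\mathbf{m}})\vert$, it suffices to show that $G_{\mathbf{m}}$ admits a Hamiltonian path if and only if $\sum_{j\neq i} m_j \geq m_i - 1$ for every $i \in [k]$. I would prove the two directions separately: necessity by a direct spacing argument on an arbitrary Hamiltonian path, and sufficiency by reducing to a Hamiltonian \emph{cycle} and applying Ore's Theorem.

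For necessity (that $s_{\mathbf{m}} > 0$ forces all the inequalities) I would argue the contrapositive. Suppose $\sum_{j\neq i} m_j < m_i - 1$ for some $i$, and let $v_1,\dots,v_m$ be any Hamiltonian path, where $m = \sum_\ell m_\ell$. The $m_i$ vertices of the part $f_i$ occupy positions $p_1 < \dots < p_{m_i}$ in this ordering; because no two vertices of $f_i$ are adjacent in $G_{\mathbf{m}}$, consecutive such positions satisfy $p_{t+1} \geq p_t + 2$, so strictly between $p_1$ and $p_{m_i}$ there lie at least $m_i - 1$ vertices drawn from the other parts. This gives $\sum_{j\neq i} m_j \geq m_i - 1$, contradicting the assumption; hence no Hamiltonian path exists and $s_{\mathbf{m}} = 0$.

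For sufficiency I would form the graph $G'$ by adjoining a single new vertex $w$ adjacent to every vertex of $G_{\mathbf{m}}$, so that $G'$ is the complete $(k+1)$-partite graph with part sizes $m_1,\dots,m_k,1$. Deleting $w$ from a Hamiltonian cycle of $G'$ yields a Hamiltonian path of $G_{\mathbf{m}}$, so it is enough to produce a Hamiltonian cycle in $G'$. I would then verify Ore's hypotheses: $G'$ is connected (every vertex is adjacent to $w$) and has $n' = m+1 \geq 3$ vertices since $m \geq k \geq 2$. The only non-adjacent pairs $u,v$ in $G'$ are those lying in a common part $f_i$, and each such vertex has degree $m - m_i + 1$ in $G'$; thus $\deg_{G'}(u) + \deg_{G'}(v) = 2(m - m_i + 1) \geq n' = m+1$ is equivalent to $m - m_i \geq m_i - 1$, i.e.\ to $\sum_{j\neq i} m_j \geq m_i - 1$, which holds by assumption. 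Ore's Theorem then delivers a Hamiltonian cycle in $G'$, hence a Hamiltonian path in $G_{\mathbf{m}}$, and so $s_{\mathbf{m}} > 0$.

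The step requiring the most care, and the real crux, is the sufficiency reduction together with the degree bookkeeping: one must adjoin exactly one universal vertex, confirm that within-part pairs are the only non-adjacent pairs of $G'$, and check that the Ore inequality collapses \emph{precisely} to the target condition. The consistency of the two arguments hinges on the fact that both the spacing count in the necessity direction and the degree computation in the sufficiency direction produce the same threshold $m_i - 1$; I would make sure this matching is explicit rather than coincidental.
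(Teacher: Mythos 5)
Your proposal is correct and follows essentially the same route as the paper: the necessity direction is the same counting argument (each vertex of $f_i$ other than the last must be followed by a vertex from another part, which is what your spacing count on the positions $p_1<\dots<p_{m_i}$ expresses), and the sufficiency direction is identical, adjoining a universal vertex and applying Ore's Theorem to obtain a Hamiltonian cycle whose deletion of that vertex yields the desired Hamiltonian path. Your explicit verification of the $n\geq 3$ and connectedness hypotheses of Ore's Theorem is a small point the paper leaves implicit, but otherwise the two arguments coincide.
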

\begin{proof}
Firstly, we show the \emph{if} direction by proving that if there exists an $i \in [k]$ such that $\sum_{j\neq i} m_j < m_i - 1$, then $s_{\mathbf{m}} = 0$. To this end, assume there exists an $i \in [k]$ with $\sum_{j\neq i} m_j < m_i - 1$. Suppose there exists a Hamiltonian path $p$ in $G_{\mathbf{m}}$. There are $\sum_{j} m_j$ vertices in $p$, of which there are at least $m_i - 1$ vertices from the $i^{th}$ part (of the vertex set) of $G_{\mathbf{m}}$ that are not the last vertex of $p$. Each of these non-last vertices has to be immediately followed by a vertex that is not in the $i^{th}$ part of $G_{\mathbf{m}}$. Therefore, we must have $\sum_{j\neq i} m_j \geq m_i - 1$, which is a contradiction. Hence, there is no Hamiltonian path, and $s_{\mathbf{m}} = 0$.

We now prove the \emph{only if} direction. Assume that for all $i \in [k]$ we have $\sum_{j\neq i} m_j  \geq m_i - 1$. Observe that if two distinct vertices are non-adjacent in $G_{\mathbf{m}}$, then they must be in the same part (of the vertex set) of $G_{\mathbf{m}}$. Recall that we denote the vertex set of $G_{\mathbf{m}}$ by $V_{\mathbf{m}} = \bigcup^k_{i = 1} f_i$ where for all $i\in k$ we have $\vert f_i \vert = m_i$. We now add a new vertex, denoted by $u$, to $G_{\mathbf{m}}$, and we add edges between $u$ and every vertex of $G_{\mathbf{m}}$ to obtain a new graph, $G'_{\mathbf{m}}$. Note that the vertex set $V'_{\mathbf{m}}$ of $G'_{\mathbf{m}}$ has size $1+ \sum_{i=1}^k m_i$. Observe that for every $i\in [k]$ we have for every distinct pair $w,z\in f_i$ the property that $deg_{G'_{\mathbf{m}}}(w) + deg_{G'_{\mathbf{m}}}(z) = 2(1 + \sum_{j\neq i} m_j)\geq m_i + 1 + \sum_{j\neq i} m_j =  \vert V'_{\mathbf{m}} \vert$, where the inequality in the middle follows from our initial assumption. Therefore, by Ore's Theorem, there exists a Hamiltonian cycle in $G'_{\mathbf{m}}$. Take any such Hamiltonian cycle and remove the vertex $u$ together with its two adjacent edges: what remains is a Hamiltonian path for $G_{\mathbf{m}}$.
\end{proof}

We can now give a recurrence for calculating the number of Hamiltonian paths in $G_{\mathbf{m}}$. Note that  $s_{\mathbf{1_i}} = 1$ for all $i\in [k]$, and $s_{\mathbf{1}} = k!$ because that is just the complete graph on $k$ vertices.

\begin{proposition}\label{recurHP}
Let $\mathbf{m} \in \mathbb{Z}_{\geq 1}^k$, and consider the tuple $\mathbf{m'} = \mathbf{m}_{+1_i}$ for some $i\in [k]$. Let $s_{\mathbf{m'}}$ denote the number of Hamiltonian paths in the complete $k-$partite graph $G_{\mathbf{m'}}$. Then, $s_{\mathbf{m'}}$ satisfies the following recurrence:
\begin{displaymath}
s_{\mathbf{m'}} =
\begin{cases}
0 & \text{if $\exists \ell\in [k]$ such that $\sum\limits_{j\neq \ell} m'_j < m'_{\ell} - 1$} \\
s_{\mathbf{m}} \cdot (1 - m_i + \sum\limits_{j\neq i} m_j) + \sum\limits_{\substack{j\neq i \\ m_j \geq 2}} m_j \cdot (m_j - 1) \cdot s_{\mathbf{m}_{-1_j}} & \text{if $\forall \ell\in [k] \ \sum\limits_{j\neq \ell} m'_j \geq m'_{\ell} - 1$}
\end{cases}
\end{displaymath}
\end{proposition}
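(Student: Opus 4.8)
The plan is to read $s_{\mathbf{m'}} = |HP(G_{\mathbf{m'}})|$ through Proposition~\ref{HamPathsPermutes} and to count these Hamiltonian paths directly, viewing a Hamiltonian path as a \emph{sequence} of vertices so that a path and its reversal count separately (consistent with $s_{\mathbf{1}}=k!$). The first line of the recurrence is immediate from Proposition~\ref{oresThForHPsCompleteKPartite}: if some $\ell$ violates $\sum_{j\neq\ell}m'_j\geq m'_\ell-1$, then $s_{\mathbf{m'}}=0$. So I would assume for the rest that $\mathbf{m'}$ satisfies the Ore-type condition. Let $v$ denote the single extra part-$i$ vertex distinguishing $G_{\mathbf{m'}}$ from $G_{\mathbf{m}}$, and partition $HP(G_{\mathbf{m'}})$ by the local role of $v$ in the sequence. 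Call a path \emph{Type I} if $v$ is an endpoint, or $v$ is internal with its two neighbours in different parts; call it \emph{Type II} if $v$ is internal with both neighbours in the same part $V_j$ (necessarily $j\neq i$, since $v\in V_i$ and same-part vertices are non-adjacent). These types are disjoint and exhaustive, and I claim they produce the two summands.

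For Type I, deleting $v$ yields exactly a Hamiltonian path of $G_{\mathbf{m}}$: at an endpoint this is clear, and in the interior the two neighbours of $v$ lie in different parts, hence are joined by an edge of $G_{\mathbf{m}}$, so the splice is legal. Conversely, re-inserting $v$ into a Hamiltonian path $P$ of $G_{\mathbf{m}}$ is legal exactly at those slots (gaps, including the two ends) both of whose incident vertices avoid $V_i$. The clean point is that this number of legal slots is the same for every $P$: there are $\sum_j m_j+1$ slots, and since no two of the $m_i$ vertices of $P$ from $V_i$ are adjacent, each blocks two distinct slots with no slot blocked twice, leaving $\big(\sum_j m_j+1\big)-2m_i=1-m_i+\sum_{j\neq i}m_j$ legal slots. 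Hence Type I contributes $s_{\mathbf{m}}\cdot\big(1-m_i+\sum_{j\neq i}m_j\big)$.

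For Type II, fix $j\neq i$ (such paths need $m_j\geq 2$). Given a Type II path whose neighbours of $v$ lie in $V_j$, I would record the ordered pair $(a,b)$ of these neighbours (predecessor, successor of $v$) and contract the consecutive block $a,v,b$ to a single token $\tau$. Since the remaining neighbours of $a$ and $b$ lie outside $V_j$, the token $\tau$ behaves exactly like a fresh vertex of $V_j$, and the contracted sequence is a Hamiltonian path of the complete multipartite graph with part sizes $\mathbf{m}_{-1_j}$ (part $i$ loses $v$, part $j$ loses one vertex on net). Reversing this---choose the ordered pair $(a,b)$ of distinct $V_j$-vertices of $G_{\mathbf{m'}}$ in $m_j(m_j-1)$ ways, take any of the $s_{\mathbf{m}_{-1_j}}$ Hamiltonian paths of $G_{\mathbf{m}_{-1_j}}$, and expand $\tau$ back to $a,v,b$---gives a bijection, so the $V_j$-contribution is $m_j(m_j-1)\,s_{\mathbf{m}_{-1_j}}$. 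Summing over eligible $j$ and adding Type I yields the claimed recurrence.

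I expect the main obstacle to be the Type~II bijection: one must check that contraction always produces a \emph{valid} Hamiltonian path of $G_{\mathbf{m}_{-1_j}}$ (using that the outer neighbours of $a$ and $b$ avoid $V_j$), that $\tau$ genuinely plays the role of one part-$j$ vertex so the count is $s_{\mathbf{m}_{-1_j}}$ (invoking that $s$ depends only on the part sizes), and that the ordered pair together with the contracted path reconstructs the original path uniquely, so there is no over- or under-counting. The uniform-slot count underlying Type~I is the other delicate point, but it follows cleanly once one notes that non-adjacency of same-part vertices prevents any slot from being blocked twice.
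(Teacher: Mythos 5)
Your proposal is correct and follows essentially the same approach as the paper: the same case split by the local role of the newly added vertex, the identical slot-counting argument giving the factor $1-m_i+\sum_{j\neq i}m_j$, and the same reduction of the sandwiched case to $s_{\mathbf{m}_{-1_j}}$ with factor $m_j(m_j-1)$. The only cosmetic difference is that in the second case you contract the block $a,v,b$ to a single part-$j$ token, whereas the paper deletes the pair $u,z$ and re-inserts it after a chosen part-$j$ vertex; the two bookkeeping schemes are equivalent.
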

\begin{proof}
The first case follows directly from Proposition \ref{oresThForHPsCompleteKPartite}.\newline
Consider the second case. Observe that we obtain $G_{\mathbf{m'}}$ by adding a new vertex denoted by $u$ to the vertex part $f_i$ of $G_{\mathbf{m}}$, together with edges between $u$ and every vertex not in $f_i$. By Proposition \ref{oresThForHPsCompleteKPartite} we have $s_{\mathbf{m'}} > 0$. Therefore, take a Hamiltonian path $p$ of $G_{\mathbf{m'}}$, and consider the position of $u$ in $p$. We have two main distinct cases:\newline
\textit{Case 1}: Suppose $p = \dots w u z \dots$ such that $w$ and $z$ are from different vertex parts of $G_{\mathbf{m'}}$, or $u$ is the first or last vertex of $p$. We will show how to count this type of Hamiltonian paths of $G_{\mathbf{m'}}$ when we know the number of Hamiltonian paths of $G_{\mathbf{m}}$. Now, if we let $p' = p\setminus \{u\}$ denote the path obtained by deleting $u$ together with its adjacent edges from $p$ and then reconnecting the path by inserting the edge $\{w,z\}$ (or, if $u$ is the first or last vertex of $p$, then $p'$ is already connected), then we have that $p'$ is, in fact, a Hamiltonian path for $G_{\mathbf{m}}$. Therefore, for each Hamiltonian path $p'$ of $G_{\mathbf{m}}$ we need to count the number of ways we can \emph{insert} the new vertex $u$ in $p'$, so that we obtain a Hamiltonian path for $G_{\mathbf{m'}}$. For each $p' \in HP(G_{\mathbf{m}})$ there are exactly $1 + \sum\limits_{j=1}^k m_j$ candidate positions where $u$ might be inserted. Of those, exactly $2\cdot m_i$ positions are not allowed, because we cannot insert $u$ immediately preceding or immediately following another vertex from the part $f_i$. Hence, there is a total of $1 - m_i + \sum\limits_{j\neq i} m_j$ positions where we can insert $u$ to obtain a Hamiltonian path for $G_{\mathbf{m'}}$ of the type considered here from any $p'\in HP(G_{\mathbf{m}})$. Note that for any $p'\in HP(G_{\mathbf{m}})$ and for any allowed insertion position in $p'$ we obtain a distinct $p\in HP(G_{\mathbf{m'}})$. Therefore, we have a total of $s_{\mathbf{m}} \cdot (1 - m_i + \sum\limits_{j\neq i} m_j)$ Hamiltonian paths in $G_{\mathbf{m'}}$ where the vertex $u$ is not sandwiched in between two vertices from the same vertex part.\\
\textit{Case 2}: Suppose $p = \dots w u z \dots$ such that $w,z\in f_j$ for some $j\in [k]$ (with $j\neq i$, otherwise $p$ would not constitute a Hamiltonian path). Observe that in this case $m'_j = m_j \geq 2$. Also note that $u$ is not the first or last vertex of $p$. Observe that if we remove $u$ and its two adjacent edges from $p$, we cannot reconnect the path since there is no edge between $w$ and $z$. Therefore, $p$ cannot be obtained from a Hamiltonian path in $G_{\mathbf{m}}$ as in \textit{Case 1} above. However, observe that we can construct a path $p'$ by removing $u,z$ and the edges adjacent to them from $p$, and then reconnecting the path by inserting the edge between $w$ and the vertex following $z$ (if any: $z$ might be the last vertex of $p$, in which case $p'$ is already connected), which must exist: since $w$ and $z$ are in the same vertex part of $G_{\mathbf{m'}}$, then the vertex immediately following $z$ (if any) is in a different vertex part from $w$. The path $p'$ obtained this way is a Hamiltonian path for the graph $G_{\mathbf{m}_{-1_j}}$, which can be viewed as the graph obtained from $G_{\mathbf{m}}$ by deleting the vertex $z$ together with all its adjacent edges from $G_{\mathbf{m}}$. Therefore, for each possible choice of $z$, of which there are $m_j$, we construct $G_{\mathbf{m}_{-1_j}}$ by removing $z$ and its adjacent edges from $G_{\mathbf{m}}$, and then we can take any $p' \in HP(G_{\mathbf{m}_{-1_j}})$, and we can insert $u$ and $z$ together (and the three, or two, if inserting at the end of $p'$, adjacent edges) in any of the $m_j - 1$ positions in $p'$ immediately following a vertex from $f_j$. Therefore, for each particular choice of $z$ we have a total of $(m_j - 1) \cdot s_{\mathbf{m}_{-1_j}}$ Hamiltonian paths of $G_{\mathbf{m'}}$ obtained that way. Hence, we have a total of $m_j \cdot (m_j - 1) \cdot s_{\mathbf{m}_{-1_j}}$ Hamiltonian paths of the type considered here for each choice of $j\neq i$ such that $m_j \geq 2$, and then we sum over all such $j$.
\end{proof}

The recurrence in Proposition \ref{recurHP} suggests a dynamic programming algorithm for computing $s_{\mathbf{m}}$ for any $\mathbf{m} \in M$. We can pre-compute all the required $\vert M \vert = \prod_{i=1}^k n_i \leq n^k$ values in a $k$-dimensional array at the start in a dynamic programming fashion, so that when we calculate each $s_{\mathbf{m}}$ we have already computed the quantities required in the recurrence. Moreover, we can pre-compute all the necessary Stirling numbers of the second kind in a dynamic programming fashion by using the recurrence $S(a,b) = b \cdot S(a-1,b) + S(a-1,b-1)$. Then, we evaluate the sum in (\ref{sumPhi}) by looking up each of the required terms in our pre-computed arrays. Overall, in this way we compute $\vert \mathcal{AO}(G_{\mathbf{n}}) \vert$ in at most $n^{O(k)}$ steps. Thus, when $k$ is constant, we obtain an algorithm for $\vert \mathcal{AO}(G_{\mathbf{n}}) \vert$ that runs in time polynomial in the number of vertices in the graph.

\subsection{Edge removal away from a complete $k$-partite graph}\label{extensionSubsec}
Suppose we have a graph $G'$ obtained from a complete $k$-partite graph $G_{\mathbf{n}}$ by adding an edge between two vertices in the $i^{th}$ vertex part in $G_{\mathbf{n}}$. Let that edge be $\{u,v\}$. Then, using the original deletion-contraction property from \cite{stanley} we have that

$$\vert \mathcal{AO}(G')\vert = \vert \mathcal{AO}(G' \setminus \{u,v\}) \vert + \vert AO(G'/\{u,v\}) \vert $$

Of course, by construction we have that $\vert \mathcal{AO}(G' \setminus \{u,v\}) \vert = \vert \mathcal{AO}(G_{\mathbf{n}}) \vert$. Moreover, since the vertices $u$ and $v$ have exactly the same neighbourhood - namely, the vertices $\bigcup_{j\neq i} V_j$ together with $u$ and $v$ themselves - it follows that the graph $G'/\{u,v\}$ is a complete $k-$partite graph with almost the same vertex set as $G_{\mathbf{n}}$ except that the $i^{th}$ part of $G'/\{u,v\}$ has one less vertex than the $i^{th}$ part of  $G_{\mathbf{n}}$. Therefore, we can compute $\vert \mathcal{AO}(G')\vert$ in time $n^{O(k)}$ by using two calls to the algorithm described in Section \ref{mainSubsec} above.

\section{Conclusion}
We have given a polynomial time algorithm for computing the number of acyclic orientations of a complete $k$-partite graph when $k$ is constant. The algorithm we have presented is based on a recurrence derived from reduction to counting the number of Hamiltonian paths in complete $k$-partite graphs. Our result extends previous work by Cameron et al. \cite{cameron}, although we are unable to give a similarly neat formula when $k>2$. A natural future extension is to consider what happens if we remove an edge from a complete $k$-partite graph, and how to adapt our algorithm to that resulting graph.

\section*{Acknowledgements}

This work has benefited from conversations with and comments on initial draft by Mary Cryan.

\end{document}